\newtheorem{thm}{Theorem}[section]
\newtheorem{lem}[thm]{Lemma}
\newtheorem{prop}[thm]{Proposition}
\theoremstyle{definition}
\theoremstyle{remark}
\newtheorem{rem}[thm]{Remark}
\numberwithin{equation}{section}
\begin{document}
\title[On the study of the wave equation set on a singular cylindrical...]{On the study of the wave equation set on a singular cylindrical
domain}

\author{Belkacem Chaouchi}
\address{Lab. de l'Energie et des Syst\' emes Intelligents, Khemis Miliana University, 44225 Khemis Miliana, Algeria}
\email{chaouchicukm@gmail.com}

\author{Marko Kosti\' c}
\address{Faculty of Technical Sciences,
University of Novi Sad,
Trg D. Obradovi\' ca 6, 21125 Novi Sad, Serbia}
\email{marco.s@verat.net}

{\renewcommand{\thefootnote}{} \footnote{2010 {\it Mathematics
Subject Classification.} 34G10, 34K10, 12H2O, 35J25, 44A45.
\\ \text{  }  \ \    {\it Key words and phrases.} Operational differential equation,
H\"{o}lder space, cuspidal point, wave equation.
\\  \text{  }  \ \ The second named author is partially supported by grant 174024 of Ministry
of Science and Technological Development, Republic of Serbia.}}

\begin{abstract}
In this work we give new regularity results of solutions for the linear wave
equation set in a nonsmooth cylindrical domain. Different types of conditions are imposed on the boundary of the singular domain.
Our study is performed in some particular anisotropic H\"{o}lder spaces.
\end{abstract}

\maketitle

\section{Introduction and position of the problem}

Set
\begin{equation}
\Omega :=\left \{ \left( x,y\right) \in 
\mathbb{R}
^{2}:0<x<a\text{, }\varphi _{2}\left( x\right) <y<\varphi _{1}\left(
x\right) \right \} ,  \label{domaine_cuspide}
\end{equation}%
where $a>0$ is a finite real number and $\varphi _{1},\ \varphi _{2}$ are
continuous real-valued functions defined on $\left[ 0,a\right] $ satisfying
the following conditions:
\begin{enumerate}
\item $\varphi _{1},$ $\varphi _{2}$ are of class $C^{2}$ on $\left[ 0,a%
\right] ,$

\item $\varphi :=\varphi _{1}-\varphi _{2}>0$ on $\left] 0,a\right] ,$

\item for $i\in \left \{ 1,2\right \} :\varphi _{i}\left( 0\right) =\varphi
_{i}^{\prime }\left( 0\right) =0,$

\item$\varphi _{1},$ $\varphi _{2}$ are 
strictly monotone functions.
\end{enumerate}

We suppose that the boundary $\partial \Omega $ of the cusp domain (\ref%
{domaine_cuspide}) is given by 
\begin{equation*}
\partial \Omega =\Gamma _{1}\cup \Gamma _{2}\cup \Gamma _{3},
\end{equation*}%
where%
\begin{eqnarray*}
\Gamma _{1} &:=&\left \{ \left( x,\varphi _{1}\left( x\right) \right)
:0 \leq  x \leq a\right \} , \\
\Gamma _{2} &:=&\left \{ \left( x,\varphi _{2}\left( x\right) \right)
:0 \leq x \leq a\right \} , \\
\Gamma _{3} &:=&\left \{ \left( 0,y\right) :\varphi _{2}\left( 0\right)
\leq y\leq \varphi _{1}\left( 0\right) \right \} \cup \left \{ \left( a,y\right) :\varphi _{2}\left( a\right)
\leq y\leq \varphi _{1}\left( a\right) \right \} .
\end{eqnarray*}%
In the cylinder $\Pi =\left[ 0,1\right] \times \Omega $, we consider the Cauchy
problem for the linear wave equation\textbf{\ } 
\begin{equation}
\square _{\lambda }u=h,\text{ \ }\lambda >0,  \label{LeProbleme}
\end{equation}%
equpped with the initial conditions 
\begin{equation}
\left. \mathcal{L}u\right \vert _{\left \{ 0\right \} \times \Omega }=0,
\label{V1}
\end{equation}%
\begin{equation}
\left. \mathcal{L}u\right \vert _{\left \{ 1\right \} \times \Omega }=0.
\label{V2}
\end{equation}%
Here,%
\begin{equation*}
\square _{\lambda }u:=\partial _{t}^{2}u-\partial _{x}^{2}u-\partial
_{y}^{2}u-\lambda u
\end{equation*}%
and%
\begin{equation*}
\mathcal{L}u:=\partial _{t}^{2}u+\partial _{t}u+u.
\end{equation*}%
In addition, we impose the following
boundary conditions to the problem (\ref{LeProbleme}):
\begin{equation}
\left. \partial _{y}u-u\right \vert _{\left[ 0,1\right] \times \Gamma
_{2}}=0,\left. \partial u\right \vert _{\left[ 0,1\right] \times \Gamma
_{2}}=0,  \label{neumann}
\end{equation}%
and%
\begin{equation}
\left. \partial _{x}u-u\right \vert _{\left[ 0,1\right] \times \Gamma
_{3}}=0,\left. u\right \vert _{\left[ 0,1\right] \times \left( \Gamma
_{1}\cup \Gamma _{2}\right) }=0.  \label{dirichlet}
\end{equation}%
Questions concerning the solvability of hyperbolic problems posed on
nonsmooth cylindrical domains have been studied by several authors. We cite
particularly \cite{hun1}-\cite{hun2}, where the $L^{p}$-theory of such
problems has been discussed for cylindrical domains containing a cusp
bases. The methods of investigation are derived from the well known \textit{%
a priori} estimates techniques and the potential theory.\newline

In this paper, we analyze the solvability of Problem (\ref{LeProbleme})$%
\sim $(\ref{dirichlet}) in the case that the right-hand side $h$\ belongs to the
anisotropic H\"{o}lder space $C^{\theta }\left( \left[ 0,1\right]
;L^{p}\left( \Omega \right) \right) $ with $0<\theta <1$\ and $1<p<\infty ,$
endowed with the norm 
\begin{equation*}
\left \Vert f\right \Vert _{C^{\theta }\left( \left[ 0,1\right] ;L^{p}\left(
\Omega \right) \right) }:=\sup_{t,t'\in [0,1], t\neq t'}\frac{\left \Vert f\left( t\right) -f\left(
t^{\prime }\right) \right \Vert _{L^{p}\left( \Omega \right) }}{\left \vert
t-t^{\prime }\right \vert ^{\theta }}.
\end{equation*}%
The boundary conditions (\ref{V1})$\sim $(\ref{dirichlet}) 
involve the second derivative with respect to the time variable and Robin
type conditions with respect to the space variables. Moreover, in our study, we consider
the anisotropic character of the functional framework.
The main novelty of this paper is a presentation of a new alternative abstract method for the study
of (\ref{LeProbleme})$\sim $(\ref{dirichlet}). The main idea of this method
is to transform our concrete problem to an abstract differential equation
in an appropriately chosen Banach space. The use of such an argument is
very effective and provide some interesting results concerning the
maximal regularity of solutions for Problem (\ref{LeProbleme})$\sim $(\ref%
{dirichlet}) near the singular part of the boundary of the cylindrical
domain $\Pi $. For more details about this abstract point of view, we refer
the reader to \cite{berooug1} and \cite{chaouchi}-\cite{punj}, where some elliptic and parabolic problems
on particular cusp domains have been successfully studied.

The paper is organized as follows. In the next section, we show that our
problem can be transformed by a suitable changes of variables into a
particular abstract second order differential equation. Section 3 is devoted to the
complete study of the abstract version of the transformed problem. In
Section 4, we come back to the initial problem in the cusp domain and prove
our main result. That is:

\begin{thm}
\label{main result}Let $h\in C^{\theta }\left( \left[ 0,1\right]
;L^{p}\left( \Omega \right) \right) $ with $0<\theta <1$\ and $1<p<\infty .$%
\ Then, Problem (\ref{LeProbleme})$\sim $(\ref{dirichlet}) has a unique
solution $u$ such that 
\begin{equation*}
\varphi ^{-2}u\in C^{2}\left( \left[ 0,1\right] ;L^{p}\left( \Omega \right)
\right) \ \ \mbox{ and }\ \
\partial _{t}^{2}u,\text{ }\partial _{x}^{2}u,\text{ }\partial _{y}^{2}\in
C^{\theta }\left( \left[ 0,1\right] ;L^{p}\left( \Omega \right) \right) .%
\end{equation*}
\end{thm}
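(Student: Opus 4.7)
The overall plan follows the roadmap already laid out in the introduction: flatten the cusp by a well-chosen change of variables, recast the resulting (singular) PDE as a second-order abstract Cauchy problem in $t$ for an unknown with values in a Banach space of functions of the spatial variables, and finally invoke the abstract maximal-regularity result to be established in Section~3. Concretely, I would first introduce the diffeomorphism $(x,y)\mapsto(x,z)$ with $z:=(y-\varphi_{2}(x))/\varphi(x)$, which maps $\Omega$ onto the rectangle $R=(0,a)\times(0,1)$. Computing $\partial_y=\varphi^{-1}\partial_z$ and $\partial_y^{2}=\varphi^{-2}\partial_z^{2}$, and expressing $\partial_x^{2}$ in the new variables, one sees that the Laplacian picks up a factor $\varphi^{-2}$ on the $\partial_z^{2}$ term together with lower-order terms containing $\varphi'/\varphi$ and $\varphi''/\varphi$. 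The natural rescaling $v:=\varphi^{-2}u$ (which is what appears in the statement) then absorbs these singularities and yields an equation of the form $\partial_t^{2}v+A(x)v=\tilde h$ on $[0,1]$, together with the boundary conditions on $\Gamma_1,\Gamma_2,\Gamma_3$ transplanted onto the boundary of $R$.

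Next I would put this in abstract form. Set $X:=L^{p}(\Omega)$ and let $A$ denote the closed operator in $X$ obtained from $-\partial_x^{2}-\partial_y^{2}-\lambda$ together with the Robin/Dirichlet conditions of \eqref{neumann}--\eqref{dirichlet}. After the change of variables and rescaling, the problem reduces to
\begin{equation*}
v''(t)+Av(t)=\tilde h(t),\quad t\in[0,1],\qquad \mathcal{L}v(0)=\mathcal{L}v(1)=0,
\end{equation*}
to be studied in $C^{\theta}([0,1];X)$. The strategy for Section~3 is then to show that $-A$ generates a bounded analytic semigroup on $X$ (after possibly enlarging $\lambda$), extract its square root $\sqrt{A}$, and factorise the second-order operator $\partial_t^{2}+A$ through the first-order operators $\partial_t\pm\sqrt{A}$. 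Given the $C^{\theta}$ time regularity of the data, a Da Prato--Grisvard type theorem delivers $v''\in C^{\theta}([0,1];X)$ and $Av\in C^{\theta}([0,1];X)$, together with $v\in C^{2}([0,1];X)$. The mixed two-point conditions $\mathcal{L}v(0)=\mathcal{L}v(1)=0$ would be handled by representing $v$ via the usual Green function of $\partial_t^{2}+A$ on $[0,1]$ and verifying that the resulting boundary operators are invertible in the relevant function spaces.

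Finally I would translate back: since $v=\varphi^{-2}u$, the conclusion $v\in C^{2}([0,1];X)$ is exactly the statement $\varphi^{-2}u\in C^{2}([0,1];L^{p}(\Omega))$, and the fact that $Av\in C^{\theta}([0,1];X)$ together with elliptic regularity (applied pointwise in $t$) on the rectangle $R$ gives $\partial_t^{2}u$, $\partial_x^{2}u$, $\partial_y^{2}u \in C^{\theta}([0,1];L^{p}(\Omega))$, which is the assertion of Theorem~\ref{main result}. Uniqueness follows from the invertibility of $\partial_t^{2}+A$ with the given two-point conditions, as an $L^p$-energy argument on the homogeneous problem.

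The main obstacle I expect is the third step: verifying that the abstract operator $A$ arising after the cusp-flattening change of variables has a bounded $H^\infty$-calculus (or at least generates a bounded analytic semigroup with sufficiently strong functional calculus) on $L^{p}(\Omega)$, despite the unbounded coefficients $\varphi'/\varphi$ and $\varphi''/\varphi$ near the cusp point and the nonstandard Robin-type boundary conditions on $\Gamma_{2}$ and $\Gamma_{3}$. The companion difficulty is checking that the rescaling $u\mapsto \varphi^{-2}u$ is compatible with these boundary conditions, so that the boundary data for $v$ still define a closed operator with dense domain in $X$; the remaining computations (transformation of derivatives, estimates on the commutators with multiplication by $\varphi^{\pm 2}$) are tedious but routine once the abstract framework is fixed.
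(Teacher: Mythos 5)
There is a genuine gap, and it sits exactly where you located your ``main obstacle.'' You flatten the cusp only in the $y$-direction, via $z=(y-\varphi_{2}(x))/\varphi(x)$, and keep $x$ as a coordinate on $(0,a)$. The resulting spatial operator then carries the degenerate principal part $\varphi^{-2}(x)\,\partial_{z}^{2}$ (plus coefficients $\varphi'/\varphi$, $\varphi''/\varphi$ blowing up at $x=0$), and conjugating by a multiplication operator $u\mapsto\varphi^{-2}u$ cannot repair this: multiplication by a function of $x$ commutes with the principal symbol, so the rescaled operator is still degenerate elliptic at the cusp. Establishing sectoriality or a bounded $H^{\infty}$-calculus for such an operator on $L^{p}$ of the rectangle is not a routine verification --- it is the whole difficulty, and it is precisely what the paper avoids. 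The missing idea is the second substitution $\xi=-\int_{a}^{x}d\sigma/\varphi(\sigma)$, which sends the cusp point $x=0$ to $\xi=+\infty$ and maps $\Omega$ onto the semi-infinite strip $Q=(0,+\infty)\times(0,1)$. In the variables $(\xi,\eta)$ one has $\partial_{x}^{2}+\partial_{y}^{2}=\varphi^{-2}\bigl(\partial_{\xi}^{2}+\partial_{\eta}^{2}+\mathcal{P}\bigr)$ with $\mathcal{P}$ having \emph{bounded} $C^{\infty}$ coefficients (thanks to $\varphi_{i}(0)=\varphi_{i}'(0)=0$), so the leading spatial operator is the flat Laplacian on $Q$ with Robin/Dirichlet conditions. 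Its resolvent estimate is then obtained by elementary one-dimensional Green kernels in $\xi$ and $\eta$ combined through the commutative Da Prato--Grisvard sum-of-operators theorem; no $H^{\infty}$-calculus for a singular operator is ever needed.

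Two smaller points. First, the correct rescaling is $w=\varphi^{-2/q}v$ with $q=p/(p-1)$, not $v=\varphi^{-2}u$: the exponent $2/q$ is chosen to compensate the Jacobian of the change of variables so that $C^{\theta}([0,1];L^{p}(\Omega))$ data transform into $C^{\theta}([0,1];L^{p}(Q))$ data without weights; the factor $\varphi^{-2}$ in the statement of the theorem only reappears at the very end, since $\varphi^{-2/q}=\varphi^{2/p}\varphi^{-2}$. Second, for the time variable your instinct is closer to the paper than your ``factorise through $\partial_{t}\pm\sqrt{A}$'' phrasing suggests: because the two-point conditions are $\mathcal{L}w|_{t=0}=\mathcal{L}w|_{t=1}=0$ with $\mathcal{L}=\partial_{t}^{2}+\partial_{t}+1$ (not Dirichlet), the paper solves the scalar two-point problem explicitly and then replaces $z$ by $A$ via a Dunford integral over a sectorial contour, checking convergence and $C^{\theta}$-regularity of each of the six resulting terms; an off-the-shelf semigroup factorisation does not directly cover these boundary operators.
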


\section{Change of variables and the abstract setting of the problem}

Consider the following change of variables%
\begin{equation*}
\begin{array}{ll}
T & :\Pi \rightarrow \Sigma , \\ 
& \left( t,x,y\right) \mapsto \left( t,\xi ,\eta \right) :=\left( t,-\int
\limits_{a}^{x}\dfrac{d\sigma }{\varphi \left( \sigma \right) },\dfrac{%
y-\varphi _{2}\left( x\right) }{\varphi \left( x\right) }\right) ,%
\end{array}%
\end{equation*}%
with $\Sigma $ being the semi-infinite domain given by 
$
\Sigma :=\left[ 0,1\right] \times Q,
$
where
$
Q:=\left] 0,+\infty \right] \times \left] 0,1\right[ .  \label{strip Q}
$
Now, define the following change of functions%
\begin{align*}
\left \{ 
\begin{array}{l}
v\left( t,\xi ,\eta \right) :=(v\circ T)(t,x,y)=u\left( t,x,y\right) , \\ 
\\ 
g\left( t,\xi ,\eta \right) :=(g\circ T)(t,x,y)=h\left( t,x,y\right).
\end{array}%
\right.  \label{ChangementVetG}
\end{align*}%
We have
\begin{eqnarray*}
&&\partial _{x}^{2}u+\partial _{y}^{2}u \\
&=&\partial _{\xi }^{2}v+\partial _{\eta }^{2}v+\eta \left( \varphi
_{2}^{\prime }+\eta \varphi ^{\prime }\right) ^{2}\partial _{\eta
}^{2}v-2\left( \varphi _{2}^{\prime }+\eta \varphi ^{\prime }\right)
\partial _{\eta \xi }^{2}v \\
&&+\varphi ^{\prime }\partial _{\xi }v-\left( 2\varphi ^{\prime }\varphi
_{2}^{\prime }-\varphi \varphi _{2}^{\prime \prime }-\eta \left( \varphi
\varphi ^{\prime \prime }-2\left( \varphi ^{\prime }\right) ^{2}\right)
\right) \partial _{\eta }v.
\end{eqnarray*}%
To avoid the use of weighted function spaces, we introduce the following
change of function
$
\varrho w=v,
$
where%
\begin{equation*}
\varrho =\varphi ^{2/q}\text{ and }q=\frac{p}{p-1}.
\end{equation*}%
Then we have 
\begin{eqnarray*}
&&\partial _{\xi }^{2}v+\partial _{\eta }^{2}v \\
&=&\partial _{\xi }^{2}w+\partial _{\eta }^{2}w+\frac{2}{q}\left( \frac{2}{q}%
\left( \varphi ^{\prime }\right) ^{2}+\varphi \varphi ^{\prime \prime
}\right) w+\left( \varphi _{2}^{\prime }+\eta \varphi ^{\prime }\right)
^{2}\partial _{\eta }^{2}w \\
&&-\frac{4}{q}\varphi ^{\prime }\partial _{\xi }w-2\left( \varphi
_{2}^{\prime }+\eta \varphi ^{\prime }\right) \left( \partial _{\eta \xi
}^{2}w-\frac{2}{q}\varphi ^{\prime }\partial _{\eta }w\right) +\varphi
^{\prime }\left( \partial _{\xi }w-\frac{2}{q}\varphi ^{\prime }w\right) \\
&&+\left( 2\varphi ^{\prime }\varphi _{2}^{\prime }-\varphi \varphi
_{2}^{\prime \prime }-\eta \left( \varphi \varphi ^{\prime \prime }-2\left(
\varphi ^{\prime }\right) ^{2}\right) \right) \partial _{\eta }w.
\end{eqnarray*}%
Consequently, Problem (\ref{LeProbleme}) becomes%
\begin{equation}
\begin{array}{ll}
\varrho \partial _{t}^{2}w-\partial _{\xi }^{2}w-\partial _{\eta
}^{2}w-\lambda w-\mathcal{P}w=f, & \text{in }\Sigma ,%
\end{array}
\label{Tran_prob}
\end{equation}%
where 
$
f=\varphi ^{2/q}g
$
and $\mathcal{P}$ is the second order differential operator with $C^{\infty
} $-bounded coefficients in $\Sigma ,$ given by%
\begin{eqnarray*}
&&\mathcal{P}w  \notag \\
&:=&\frac{2}{q}\left( \frac{2}{q}\left( \varphi ^{\prime }\right)
^{2}+\varphi \varphi ^{\prime \prime }\right) w+\left( \varphi _{2}^{\prime
}+\eta \varphi ^{\prime }\right) ^{2}\partial _{\eta }^{2}w
\label{Operteurpertubation} \\
&&-\frac{4}{q}\varphi ^{\prime }\partial _{\xi }w-2\left( \varphi
_{2}^{\prime }+\eta \varphi ^{\prime }\right) \left( \partial _{\eta \xi
}^{2}w-\frac{2}{q}\varphi ^{\prime }\partial _{\eta }w\right) +\varphi
^{\prime }\left( \partial _{\xi }w-\frac{2}{q}\varphi ^{\prime }w\right) 
\notag \\
&&+\left( 2\varphi ^{\prime }\varphi _{2}^{\prime }-\varphi \varphi
_{2}^{\prime \prime }-\eta \left( \varphi \varphi ^{\prime \prime }-2\left(
\varphi ^{\prime }\right) ^{2}\right) \right) \partial _{\eta }w.  \notag
\end{eqnarray*}%
It is easy to see that the conditions (\ref{neumann}) imply that%
\begin{equation*}
\left. \partial _{\xi }w-w\right \vert _{\xi =0}=0,\text{ }\left. w\right
\vert _{\xi =+\infty }=0,
\end{equation*}%
as well as that the conditions (\ref{dirichlet}) lead to%
\begin{equation*}
\left. \partial _{\eta }w-w\right \vert _{\eta =0}=0,\text{ }\left. w\right
\vert _{\eta =1}=0. \label{imagedirichlet}
\end{equation*}%
Taking into account the conditions imposed on the functions of
parametrization, the problem \eqref{Tran_prob} can be viewed as a certain perturbation of
the following one:
\begin{equation}
\begin{array}{ll}
\varrho \partial _{t}^{2}w-\partial _{\xi }^{2}w-\partial _{\eta
}^{2}w-\lambda w=f, & \text{in }\Sigma ,%
\end{array}
\label{PrincipalProblem}
\end{equation}%
accompanied with the following conditions:%
\begin{equation}
\left. \mathcal{L}w\right \vert _{t=0}=0,\text{ }\left. \mathcal{L}w\right
\vert _{t=1}=0,  \label{New IC}
\end{equation}%
and%
\begin{equation}
\begin{array}{l}
\left. \partial _{\xi }w-w\right \vert _{\xi =0}=0,\text{ }\left. w\right
\vert _{\xi =+\infty }=0, \\ 
\left. \partial _{\eta }w-w\right \vert _{\eta =0}=0,\text{ }\left. w\right
\vert _{\eta =1}=0.%
\end{array}
\label{New BC}
\end{equation}%

In what follows, we will focus our attention to the study of the principal problem (\ref%
{PrincipalProblem})$\sim $(\ref{New BC}). First of all, we will present some information
about the regularity of a new hand term $f:$

\begin{lem}
\label{effet_chang_domaine}Let $0<\mathit{\theta }<1$and $1<p<\infty .$ Then%
\begin{equation*}
h\in C^{\theta }\left( \left[ 0,1\right] ;L^{p}\left( \Omega \right) \right)
\Leftrightarrow f\in C^{\theta }\left( \left[ 0,1\right] ;L^{p}\left(
Q\right) \right) .
\end{equation*}
\end{lem}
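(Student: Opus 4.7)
The plan is to chain together two observations: first the change of variables $T$ is a $C^2$-diffeomorphism from $\Omega$ onto $Q$, so it converts an $L^p$-integral on $\Omega$ into a weighted $L^p$-integral on $Q$; second, the factor $\varphi^{2/q}$ that links $f$ and $g$ absorbs the Jacobian weight up to a bounded function. Since the whole operation is pointwise in the time variable, H\"older regularity of $h(t)$ will transfer to H\"older regularity of $f(t)$ and vice versa.

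First I would compute the spatial Jacobian of $T$. From the definitions one reads off
\[
\partial_x\xi = -1/\varphi,\qquad \partial_y\xi = 0,\qquad \partial_x\eta = -(\varphi_2'+\eta\varphi')/\varphi,\qquad \partial_y\eta = 1/\varphi,
\]
so $|\det DT| = 1/\varphi(x)^2$ and $dx\,dy = \varphi(x)^2\,d\xi\,d\eta$. Using $g(t,\xi,\eta)=h(t,x,y)$ and that $\varphi$ does not depend on $t$, the change of variable applied to the difference $h(t,\cdot)-h(t',\cdot)$ yields
\[
\|h(t)-h(t')\|_{L^p(\Omega)}^p = \int_Q |g(t,\xi,\eta)-g(t',\xi,\eta)|^p\,\varphi(x(\xi))^2\,d\xi\,d\eta.
\]
Since $f=\varphi^{2/q}g$ and $\varphi$ depends only on the spatial variable, $f(t)-f(t')=\varphi^{2/q}(g(t)-g(t'))$, hence
\[
\|f(t)-f(t')\|_{L^p(Q)}^p = \int_Q \varphi(x(\xi))^{2p/q}\,|g(t,\xi,\eta)-g(t',\xi,\eta)|^p\,d\xi\,d\eta.
\]

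Next I would compare the two displayed identities: their integrands differ only by the weight $\varphi^{2p/q-2}$. Since $\varphi\in C^2([0,a])$ is bounded and the exponent $2/q$ has been chosen precisely to balance the Jacobian against this renormalization, this weight is a bounded continuous function of $x(\xi)$, giving an inequality in one direction. For the converse direction I would divide by the same weight, which since $\varphi > 0$ on $(0,a]$ is still a well-defined positive measurable function, and argue that the product $\varphi^{2/q}$ is invertible in the relevant weighted-$L^p$ sense on $Q$. Combining the two estimates and dividing by $|t-t'|^\theta$, the supremum norm characterizing $C^\theta([0,1];L^p(\cdot))$ on each side is controlled by the other up to a multiplicative constant independent of $t,t'$.

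The main obstacle is the control of the weight $\varphi^{2p/q-2}$, since $\varphi$ vanishes at $x=0$ (the cuspidal point) and grows away from $0$ only at the rate dictated by hypotheses (1)–(4): the conditions $\varphi_i(0)=\varphi_i'(0)=0$ together with the $C^2$ regularity and strict monotonicity are exactly what make the balance work and allow one to treat the Jacobian factor and the multiplicative factor $\varphi^{2/q}$ on equal footing. Once this equivalence of weights is in hand, the H\"older-in-$t$ equivalence follows directly from the identities above, since all spatial operations commute with the time increments.
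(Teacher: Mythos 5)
Your overall strategy -- push the $L^p$ norm through the diffeomorphism $T$, pick up the Jacobian $\varphi^{-2}$, and compare it with the renormalizing factor $\varphi^{2/q}$ -- is the natural one, and it is essentially what the paper delegates to Proposition 3.1 of the Chaouchi--Labbas--Sadallah reference and Section 2.2 of Guidotti (the paper itself gives no computation, only this citation). Your Jacobian computation is also correct: $dx\,dy=\varphi^2\,d\xi\,d\eta$, so $\|h(t)-h(t')\|_{L^p(\Omega)}^p=\int_Q\varphi^2|g(t)-g(t')|^p\,d\xi\,d\eta$ while $\|f(t)-f(t')\|_{L^p(Q)}^p=\int_Q\varphi^{2p/q}|g(t)-g(t')|^p\,d\xi\,d\eta$.

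The gap is in the step where you assert that the exponent $2/q$ ``has been chosen precisely to balance the Jacobian.'' It has not: with $q=p/(p-1)$ one has $2p/q=2p-2$, so the ratio of the two integrands is $\varphi^{2p/q-2}=\varphi^{2p-4}$, and this is comparable to a constant only when $p=2$. Since $\varphi(0)=0$ and the cusp $x=0$ corresponds to $\xi=+\infty$ in $Q$, the weight $\varphi^{2p-4}$ tends to $0$ along an unbounded part of $Q$ when $p>2$ (so the implication $f\in C^\theta([0,1];L^p(Q))\Rightarrow h\in C^\theta([0,1];L^p(\Omega))$ does not follow from your comparison, as the weight has no positive lower bound) and blows up when $1<p<2$ (so the forward implication fails from this estimate). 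No appeal to the $C^2$ regularity or strict monotonicity of $\varphi_1,\varphi_2$ repairs this; those hypotheses control the smoothness of $\varphi$, not the size of $\varphi^{2p-4}$ near the cusp. The factor that would make $g\mapsto f$ an exact isometry against the Jacobian is $\varphi^{2/p}$, not $\varphi^{2/q}$ (these agree only at $p=q=2$; note the paper's normalization is instead tuned so that $\|\varphi^{-2}u\|_{L^p(\Omega)}=\|w\|_{L^p(Q)}$). To close the argument you must either restrict to $p=2$, re-derive the transformed right-hand side and check which power of $\varphi$ actually multiplies $g$, or import a genuinely additional ingredient (e.g.\ a Hardy-type inequality near the cusp) of the sort the cited references use; the sentence ``the hypotheses are exactly what make the balance work'' is standing in for precisely the estimate that fails.
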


\begin{proof}
The result is easily obtained by reiterating the same techniques used in
Proposition 3.1 in \cite{chaouchi} and Section 2.2 in \cite{guidotti}.
\end{proof}

Now, let us write the abstract version of (\ref{PrincipalProblem})$\sim $(%
\ref{New BC}). Define the following vector-valued
functions:%
\begin{eqnarray*}
w &:&\left[ 0,1\right] \rightarrow E;\text{ }t\longrightarrow w(t);\text{%
\quad }w(t)(\eta ,\upsilon )=w(t,\xi ,\eta ), \\
f &:&\left[ 0,1\right] \rightarrow E;\text{ }\xi \longrightarrow f(t);\quad
f(t)(\xi ,\eta )=f(t,\xi ,\eta ),
\end{eqnarray*}%
where $E:=L^{p}\left( Q\right) $. Problem (\ref{PrincipalProblem})$\sim 
$(\ref{New BC}) can be reduced to the abstract differential equation%
\begin{equation*}
\begin{array}{l}
\varrho w^{\prime \prime }\left( t\right) -Aw\left( t\right) -\lambda
w\left( t\right) =f\left( t\right) ,\text{ \  \  \ }0\leq t\leq 1,%
\end{array}%
\end{equation*}%
where the operator
\begin{equation}
\begin{array}{ll}
Aw & :=\partial _{\xi }^{2}w+\partial _{\eta }^{2}w
\end{array}
\label{A operator}
\end{equation}%
acts with its natural domain $D\left( A\right) $ consisting of all functions $w\in W^{2,p}( Q) $ for which
$\partial_{\xi }w-w |_{\xi =0}=0,$ $w _{\xi =+\infty}=0,$ $\partial _{\eta }w-w| _{\eta =0}=0$ and $w
|_{\eta =1}=0.$
To make the notation less cluttered, we study the following problem 
\begin{equation}
\begin{array}{l}
w^{\prime \prime }\left( t\right) -Aw\left( t\right) -\lambda w\left(
t\right) =f\left( t\right) ,\text{ \  \  \ }0\leq t\leq 1,%
\end{array}
\label{abstract equation}
\end{equation}%
accompanied with the following boundary conditions:
\begin{equation}
\left. \mathcal{L}w\right \vert _{t=0}=0,\text{ }\left. \mathcal{L}w\right
\vert _{t=1}=0. \label{Like  ventcel conditions}
\end{equation}

\begin{rem}
In the remaining part of this work, the letter $C$ will denote a generic positive
constant not necessarily the same at each occurrence.
\end{rem}

\section{On the study of the abstract problem}

In the semi-infinite strip $Q$ defined by (\ref{strip Q}), we consider the
following spectral problem%
\begin{equation}
\begin{array}{l}
\partial _{\xi }^{2}\upsilon +\partial _{\eta }^{2}\upsilon -\lambda
\upsilon =k,
\end{array}
\label{second spectral problem}
\end{equation}%
\begin{equation}
\left. \partial _{\eta }\upsilon -\upsilon \right \vert _{\eta =0}=0,\text{ }%
\left. \upsilon \right \vert _{\eta =1}=0,  \label{first bc}
\end{equation}%
\begin{equation}
\left. \partial _{\xi }\upsilon -\upsilon \right \vert _{\xi =0}=0,\text{ }%
\left. \upsilon \right \vert _{\xi =+\infty }=0,  \label{seconf bc}
\end{equation}%
where $k\in L^{p}\left( Q\right) $ and $\lambda $ is a positive spectral
parameter. It is important here to note that the spectral analysis of the
linear operator defined by (\ref{A operator}) is based essentially on the
study of problem (\ref{second spectral problem})$\sim $(\ref{seconf bc}). To
this end, we use the sum's operator theory developed in \cite{Daprato}.

\subsection{On the sum's operator theory}

Let 
$E$ a complex Banach space and $M$, $N$ be two closed linear operators with
domains $D(A)$, $D(N)$. Let $S$ be the operator defined by 
\begin{equation}
\left \{ 
\begin{array}{l}
Su:=Mu+Nu-\lambda u, \\ 
u\in D(S):=D(M)\cap D(N),%
\end{array}%
\right.  \label{sumsoperator}
\end{equation}%
where $M$ and $N$ verify the assumptions%
\begin{equation*}
(H.1)\left \{ 
\begin{array}{l}
i)\text{ }\rho (M)\supset \sum_{M}=\left \{ \mu :\left \vert \mu \right
\vert \geq r\text{, }\left \vert Arg(\mu )\right \vert <\pi -\epsilon
_{M}\right \} \text{,} \\ 
\\ 
\text{ \  \ }\forall \mu \in \sum_{M}\text{ \  \ }\left \Vert \left( M-\mu
I\right) ^{-1}\right \Vert _{L(E)}\leq C/\left \vert \mu \right \vert , \\ 
\\ 
ii)\text{ }\rho (N)\supset \sum_{N}=\left \{ \mu :\left \vert \mu \right
\vert \geq r\text{, }\left \vert Arg(\mu )\right \vert <\pi -\epsilon
_{N}\right \} , \\ 
\\ 
\text{ \  \ }\forall \mu \in \sum_{N}\text{ \  \ }\left \Vert \left( N-\mu
I\right) ^{-1}\right \Vert _{L(E)}\leq C/\left \vert \mu \right \vert , \\ 
\\ 
iii)\text{ }\epsilon _{M}+\epsilon _{N}<\pi ,\\ 
\\ 
iv)\text{ }\overline{D(M)+D(N)}=E,%
\end{array}%
\right.
\end{equation*}%
and%
\begin{equation*}
(H.2)\left \{ 
\begin{array}{l}
\forall \mu _{1}\in \rho (M),\forall \mu _{2}\in \rho (N): \\ 
\\ 
\left( M-\mu _{1}I\right) ^{-1}\left( N-\mu _{2}I\right) ^{-1}-\left( N-\mu
_{2}I\right) ^{-1}\left( M-\mu _{1}I\right) ^{-1} \\ 
\\ 
=\left[ \left( M-\mu _{1}I\right) ^{-1}\text{; }\left( N-\mu _{2}I\right)
^{-1}\right] =0\text{,}%
\end{array}%
\right.
\end{equation*}%
with $\rho (M)$ and $\rho (N)$ being the resolvent sets of $M$ and $N$ and $L(E)$ being the space of all linear continuous operators from $E$ into $E.$ The
main result proved in \cite{Daprato} is given by the following 

\begin{thm}
Assume $(H.1)$-$(H.2)$. Then the operator $S$ defined by (\ref%
{sumsoperator}) is invertible and one has 
\begin{equation}
S^{-1}:u\rightarrow -\frac{1}{2i\pi }\int_{\Gamma }\left( M+z\right)
^{-1}\left( N-\lambda -z\right) ^{-1}u\, dz,  \label{Inverse form}
\end{equation}%
where $\Gamma $ is a suitable sectorial curve lying in $\rho (M)\cap \rho
(-N)$.
\end{thm}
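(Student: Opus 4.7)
The plan is to establish, in three steps, that the Dunford-type integral
\begin{equation*}
Tu := -\frac{1}{2\pi i}\int_{\Gamma}(M+z)^{-1}(N-\lambda-z)^{-1}u\,dz
\end{equation*}
defines a bounded operator on $E$ and provides a two-sided inverse of $S$. First I would fix the contour. By (H.1), the singular set $-\sigma(M)$ of $z\mapsto(M+z)^{-1}$ lies, outside a disk of radius $r$, in the sector $|\arg z|\leq\epsilon_M$, while the singular set $\sigma(N)-\lambda$ of $z\mapsto(N-\lambda-z)^{-1}$ lies, outside a compact set, in the sector $|\arg(-z)|\leq\epsilon_N$. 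Since $\epsilon_M+\epsilon_N<\pi$ by (H.1)(iii), these two closed subsets of $\mathbb{C}$ can be separated by a smooth oriented contour $\Gamma\subset\rho(-M)\cap\rho(N-\lambda)$ coinciding at infinity with the rays $\{|z|\geq R_{0},\ \arg z=\pm\theta\}$ for some $\theta\in(\epsilon_M,\pi-\epsilon_N)$. Along such $\Gamma$ the resolvent estimates in (H.1) yield $\|(M+z)^{-1}(N-\lambda-z)^{-1}\|\leq C/|z|^{2}$ for large $|z|$, so the integral converges absolutely in $\mathcal{L}(E)$ and $T\in\mathcal{L}(E)$.

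Second, to prove $ST=I$ on $E$, the algebraic identity $(M+N-\lambda)=(M+z)+(N-\lambda-z)$ together with the commutativity hypothesis (H.2) gives
\begin{equation*}
(M+N-\lambda)(M+z)^{-1}(N-\lambda-z)^{-1}=(N-\lambda-z)^{-1}+(M+z)^{-1},
\end{equation*}
so formally $STu=-\frac{1}{2\pi i}\int_{\Gamma}\bigl[(N-\lambda-z)^{-1}+(M+z)^{-1}\bigr]u\,dz$. Neither summand is absolutely integrable, so I would truncate $\Gamma$ to $\Gamma_R=\Gamma\cap\overline{B(0,R)}$, close each truncation by a circular arc at $|z|=R$ on the holomorphy side of the relevant factor (to the right for $(N-\lambda-z)^{-1}$, to the left for $(M+z)^{-1}$), invoke Cauchy's theorem for each closed loop, and let $R\to\infty$. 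Careful tracking of the two arc contributions, using the $1/|z|$ resolvent decay and dominated convergence, shows that they combine to produce exactly one copy of $u$ in the limit, so $STu=u$. The interchange of the unbounded operator $S$ with the Bochner integral would be justified by approximating a generic $u\in E$ by elements of the range of $(M-\mu_{1}I)^{-1}(N-\mu_{2}I)^{-1}$, a dense subspace of $E$ by (H.1)(iv) combined with the closedness of $M$ and $N$.

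Third, a symmetric computation shows $TSu=u$ for $u\in D(M)\cap D(N)$: pulling $M$ and $N$ past the resolvents via (H.2), one writes $Su=(M+z)u+(N-\lambda-z)u$ inside the integrand and obtains the same conditional-convergence situation, handled by the same contour-closure argument. The two identities together yield invertibility of $S$ with $S^{-1}=T$. The main obstacle is precisely this non-absolute convergence of the split integrals after $S$ is factored across the product of resolvents; the correct handling through truncation and arc closure is the technical heart of the Da Prato--Grisvard argument, and it is where the angular condition $\epsilon_M+\epsilon_N<\pi$ (ensuring a genuine separation wedge exists) and the commutativity (H.2) (ensuring the factored operator identity holds) are both used in an essential way.
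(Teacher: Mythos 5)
The first thing to note is that the paper itself offers no proof of this statement: it is imported verbatim as the main result of \cite{Daprato} (Da Prato--Grisvard), so your proposal can only be measured against the classical argument it attempts to reconstruct. Your step 1 (separating the singular sets of the two resolvent factors by a contour $\Gamma$, which exists precisely because $\epsilon_M+\epsilon_N<\pi$, and absolute convergence of the integral from the $C/|z|^2$ decay) is correct, as is the factored operator identity in step 2 and the left-inverse computation $TSu=u$ for $u\in D(M)\cap D(N)$ in step 3. These are indeed the skeleton of the Da Prato--Grisvard proof.

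The genuine gap is in your claim that $ST=I$ on all of $E$. To even write $STu$ you must first know $Tu\in D(S)=D(M)\cap D(N)$, and this does not follow from (H.1)--(H.2); in general it is false. Your proposed repair --- approximate $u\in E$ by elements $u_n$ of a dense subspace on which the computation is justified, then pass to the limit invoking ``closedness of $M$ and $N$'' --- does not close the gap, because the operator that would need to be closed is the sum $S=M+N-\lambda I$ with domain $D(M)\cap D(N)$, and under (H.1)--(H.2) this sum is only \emph{closable}, not closed: closedness of $M$ and $N$ separately does not transfer to $S$. From $STu_n=u_n$, $Tu_n\to Tu$, $u_n\to u$ you may conclude only $\overline{S}\,Tu=u$, where $\overline{S}$ is the closure. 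This is not a removable technicality: what Da Prato and Grisvard actually prove under these hypotheses is precisely that $S$ is closable, that $\overline{S}$ is invertible, and that $\overline{S}^{-1}$ is the Dunford integral $T$; invertibility of $S$ itself requires extra structure (for instance working in real interpolation spaces between $E$ and $D(M)$, as they do for their maximal regularity results, or bounded imaginary powers in UMD spaces as in the later Dore--Venni theorem). A smaller point: (H.1)(iv) asserts density of $D(M)+D(N)$, not density of the range of $(M-\mu_1 I)^{-1}(N-\mu_2 I)^{-1}$, so even the dense subspace you invoke needs a separate argument. In summary, your outline, made rigorous, proves $T=\overline{S}^{-1}$; the statement as literally printed (invertibility of $S$ with $S^{-1}=T$) is the customary loose phrasing of that result, and it cannot be reached by the route you describe without additional hypotheses.
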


\subsection{On the study of Laplace operator on unbounded strip}

With a little abuse of notation, the abstract version of (\ref{second
spectral problem}) is formulated as follows%
\begin{equation}
\begin{array}{ll}
H\upsilon +B\upsilon -\lambda \upsilon =k, &  \\ 
\upsilon \in D\left( B\right) \cap D\left( H\right) , & 
\end{array}
\label{the second abstract problem}
\end{equation}%
where%
\begin{equation}
\left \{ 
\begin{array}{ll}
\left( H\upsilon \right) \left( \eta \right) & :=\upsilon ^{\prime \prime
}\left( \eta \right) , \\ 
D\left( H\right) & :=\left \{ \upsilon \in W^{2,p}\left( 0,1\right) :\upsilon
^{\prime }\left( 0\right) -\upsilon \left( 0\right) =0,\upsilon \left(
1\right) =0\right \} 
\end{array}%
\right.  \label{H Operator}
\end{equation}%
and%
\begin{equation}
\left \{ 
\begin{array}{ll}
\left( B\upsilon \right) \left( \xi \right) & :=\upsilon ^{\prime \prime
}\left( \xi \right) , \\ 
D\left( B\right) & :=\left \{ \upsilon \in W^{2,p}\left( 
\mathbb{R}
^{+}\right) :\upsilon ^{\prime }\left( 0\right) -\upsilon \left( 0\right)
=0,\upsilon \left( +\infty \right) =0\right \}.
\end{array}%
\right.  \label{B Operator}
\end{equation}%
First of all, let us observe the following:

\begin{lem}
Let $H$ be the linear operator defined by (\ref{H Operator}). Then there exists 
$C>0$ such that%
\begin{equation}
(0,\infty) \subset \rho (H)\text{ and }\left \Vert (H-\mu
I)^{-1}\right \Vert _{L(E)}\leqslant \dfrac{C}{\mu },\quad \mu>0. \label{Estimate on H}
\end{equation}
\end{lem}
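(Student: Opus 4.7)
The plan is to solve the resolvent equation $(H - \mu I)\upsilon = k$ in closed form via a Green's function and then read the bound off a Schur-type estimate on the kernel. Since $H$ acts fibrewise in $\xi$, Fubini reduces the claim to the one-dimensional operator on $L^{p}(0,1)$.

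Setting $s := \sqrt{\mu}$, the problem becomes the ODE $\upsilon''(\eta) - s^{2}\upsilon(\eta) = k(\eta)$ on $(0,1)$ with $\upsilon'(0) = \upsilon(0)$ and $\upsilon(1) = 0$. First I would pick two base solutions adapted to the data: $\phi_{0}(\eta) := (s+1)e^{s\eta} + (s-1)e^{-s\eta}$ satisfying the Robin condition at $0$, and $\phi_{1}(\eta) := \sinh(s(1-\eta))$ satisfying the Dirichlet condition at $1$. A short computation gives their (constant) Wronskian
\[
W(s) = -s\bigl[(s+1)e^{s} + (s-1)e^{-s}\bigr],
\]
which is strictly negative for every $s > 0$. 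Non-vanishing of $W$ immediately yields uniqueness (hence $(0,\infty) \subset \rho(H)$) together with the representation
\[
\bigl((H-\mu I)^{-1}k\bigr)(\eta) = \int_{0}^{1} G(\eta,\eta')\,k(\eta')\,d\eta', \qquad G(\eta,\eta') := \frac{\phi_{0}(\eta \wedge \eta')\,\phi_{1}(\eta \vee \eta')}{W(s)}.
\]

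Second, I would establish the pointwise majorization $|G(\eta,\eta')| \leq C s^{-1} e^{-s|\eta - \eta'|}$, valid for $s$ bounded below, from $\phi_{0}(\eta) \leq 2(s+1)e^{s\eta}$, $\phi_{1}(\eta) \leq \tfrac{1}{2} e^{s(1-\eta)}$, and $|W(s)| \geq s(s+1)e^{s}$ on $s \geq 1$. Combined with $\sup_{\eta} \int_{0}^{1} e^{-s|\eta - \eta'|}\,d\eta' \leq 2/s$ (and its symmetric counterpart in $\eta'$), Schur's test gives $\|(H - \mu I)^{-1}\|_{L(L^{p}(0,1))} \leq C/s^{2} = C/\mu$. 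For $\mu$ in a bounded neighborhood of $0$ the same bound reduces to the invertibility of $H$ at $0$ (where the rescaled basis $\cosh(s\eta) + s^{-1}\sinh(s\eta)$, $s^{-1}\sinh(s(1-\eta))$ has Wronskian tending to a nonzero limit) together with continuity of the resolvent map; since $C/\mu$ blows up as $\mu \to 0^{+}$, any uniform bound on such a neighborhood is automatically dominated by $C/\mu$. Fubini then lifts the one-dimensional estimate to the announced bound in $L(E)$.

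The delicate point is to extract \emph{two} powers of $1/s$: one from $|W(s)|^{-1}$ after absorbing the exponential growth of $\phi_{0}\phi_{1}$, and one from the $L^{1}$-integrability of $e^{-s|\eta - \eta'|}$. A plain $L^{\infty}$-bound on $G$ would give only $O(1/\sqrt{\mu})$, so working with the sharp exponentially-decaying pointwise estimate on the Green's function is essential.
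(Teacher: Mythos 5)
Your proposal is correct and follows essentially the same route as the paper: write down the explicit Green's function for $\upsilon''-\mu\upsilon=k$ with the Robin condition at $0$ and Dirichlet condition at $1$, bound the Wronskian-type denominator from below, and exploit the exponential off-diagonal decay of the kernel to extract the two powers of $\mu^{-1/2}$. If anything, your Schur-test step is a more explicit version of the paper's argument, which only records the $L^{p}$ bound of one factor of the kernel and leaves the combination of the two factors implicit.
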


\begin{proof}
As in \cite{Hou}, a direct computation shows that 
\begin{equation*}
(H-\mu I)^{-1}\upsilon =\int \limits_{0}^{+\infty }G_{1,\sqrt{\mu }}(\eta
,s)\upsilon \left( s\right) ds,
\end{equation*}%
where $\upsilon \in L^{p}\left( 0,1\right) $ and%
\begin{equation*}
G_{1,\sqrt{\mu }}(\xi ,s)=\left \{ 
\begin{array}{l}
\dfrac{\sinh \sqrt{\mu }\left( 1-\eta \right) \left[ \sinh \sqrt{\mu }s+%
\sqrt{\mu }\cosh \sqrt{\mu }s\right] }{\sqrt{\mu }\left[ \sinh \sqrt{\mu }+%
\sqrt{\mu }\cosh \sqrt{\mu }\right] },\text{\quad }0\leqslant s\leqslant \eta
, \\ 
\\ 
\dfrac{\sinh \sqrt{\mu }\left( 1-s\right) \left[ \sinh \sqrt{\mu }\eta +%
\sqrt{\mu }\cosh \sqrt{\mu }\eta \right] }{\sqrt{\mu }\left[ \sinh \sqrt{\mu 
}+\sqrt{\mu }\cosh \sqrt{\mu }\right] },\text{\quad }\eta \leqslant
s\leqslant 1.%
\end{array}%
\right.
\end{equation*}%
Here, $\sqrt{\mu }$ is the analytic determination defined by $\Re\sqrt{%
\mu }>0.$ Observe that 
\begin{eqnarray*}
&&\left \vert \sinh \sqrt{\mu }+\sqrt{\mu }\cosh \sqrt{\mu }\right \vert \\
&=&\left \vert \frac{e^{\Re\sqrt{\mu }}}{2}\left( a_{\sqrt{\mu }}+ib_{%
\sqrt{\mu }}\right) +\frac{e^{-\Re\sqrt{\mu }}}{2}\left( c_{\sqrt{\mu }%
}+id_{\sqrt{\mu }}\right) \right \vert,
\end{eqnarray*}%
where%
\begin{equation*}
\left \{ 
\begin{array}{l}
a_{\sqrt{\mu }}=1+\Re\sqrt{\mu }\cos \Im\sqrt{\mu }-\Im%
\sqrt{\mu }\sin \Im\sqrt{\mu } \\ 
b_{\sqrt{\mu }}=1+\Re\sqrt{\mu }\sin \Im\sqrt{\mu }+\Im%
\sqrt{\mu }\cos \Im\sqrt{\mu } \\ 
c_{\sqrt{\mu }}=\left( \Re\sqrt{\mu }-1\right) \cos \Im\sqrt{\mu 
}-\Im\sqrt{\mu }\sin \Im\sqrt{\mu } \\ 
d_{\sqrt{\mu }}=\left( 1-\Re\sqrt{\mu }\right) \sin \Im\sqrt{\mu 
}+\Im\sqrt{\mu }\cos \Im\sqrt{\mu }.%
\end{array}%
\right.
\end{equation*}%
Then, we obtain
\begin{eqnarray*}
&&\left \vert \sinh \sqrt{\mu }+\sqrt{\mu }\cosh \sqrt{\mu }\right \vert \\
&\geq &\frac{e^{\Re\sqrt{\mu }}}{2}\left[ \left( 1+\Re\sqrt{\mu }%
\right) ^{2}+\left( \Im\sqrt{\mu }\right) ^{2}\right] ^{1/2}-\frac{e^{-%
\Re\sqrt{\mu }}}{2}\left[ \left( 1-\Re\sqrt{\mu }\right)
^{2}+\left( \Im\sqrt{\mu }\right) ^{2}\right] ^{1/2} \\
&\geq &\sinh \Re\sqrt{\mu }\left[ 1+\left( \Re\sqrt{\mu }\right)
^{2}+2\Re\sqrt{\mu }\right] ^{1/2}
\end{eqnarray*}%
and
\begin{equation*}
\left \vert \sinh \sqrt{\mu }+\sqrt{\mu }\cosh \sqrt{\mu }\right \vert \geq
\sinh \Re\sqrt{\mu }\left[ 1+\left( \Re\sqrt{\mu }\right) \right]
\end{equation*}%
To obtain the desired result, it suffices to see that%
\begin{eqnarray*}
&&\dfrac{\sinh \left( 1-\eta \right) \Re\sqrt{\mu }}{\sinh \Re%
\sqrt{\mu }\left[ 1+\left( \Re\sqrt{\mu }\right) \right] } \\
&=&\frac{e^{\Re\sqrt{\mu }\left( 1-\eta \right) }-e^{-\Re\sqrt{%
\mu }\left( 1-\eta \right) }}{\left( e^{\Re\sqrt{\mu }}-e^{-\Re%
\sqrt{\mu }}\right) \left( 1+\left( \Re\sqrt{\mu }\right) \right) } \\
&=&\frac{e^{-\Re\sqrt{\mu }\eta }\left[ 1-e^{-\Re\sqrt{\mu }%
\left( 2-\eta \right) }\right] }{\left( 1-e^{-2\Re\sqrt{\mu }}\right)
\left( 1+\left( \Re\sqrt{\mu }\right) \right) } \\
&\leq &\frac{Ce^{-\Re\sqrt{\mu }\eta }}{\left( 1+\left( \Re\sqrt{%
\mu }\right) \right) },
\end{eqnarray*}%
from which we deduce that%
\begin{eqnarray*}
\int \limits_{0}^{1}\left \vert \dfrac{\sinh \left( 1-\eta \right) \Re%
\sqrt{\mu }}{\sinh \Re\sqrt{\mu }\left[ 1+\left( \Re\sqrt{\mu }%
\right) \right] }\right \vert ^{p}\, d\eta &\leq &\int \limits_{0}^{1}\left \vert
e^{-\Re\sqrt{\mu }\eta }\right \vert ^{p}\, d\eta \leq \frac{1}{p\Re\sqrt{\mu }}.
\end{eqnarray*}
\end{proof}

Arguing as before,  we may conclude that the following result holds true.

\begin{lem}
Let $B$ the linear operator defined by (\ref{B Operator}) Then, there exists 
$C>0$ such that%
\begin{equation}
(0,\infty)\subset \rho (B)\text{ and }\left \Vert (B-\mu
I)^{-1}\right \Vert _{L(E)}\leqslant \dfrac{C}{\mu },\quad \mu>0.  \label{Estimate on B}
\end{equation}
\end{lem}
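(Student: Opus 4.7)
The plan is to mimic the approach used for the operator $H$: write down the resolvent kernel of $B-\mu I$ explicitly and then estimate its $L^p \to L^p$ operator norm. Fix $\mu>0$ and choose the branch of $\sqrt{\mu}$ with $\operatorname{Re}\sqrt{\mu}>0$. The two linearly independent solutions of $\upsilon''-\mu\upsilon=0$ on $(0,+\infty)$ are $e^{\pm\sqrt{\mu}\xi}$. I would take $\phi_1(\xi):=e^{-\sqrt{\mu}\xi}$ as the solution decaying at $+\infty$, and $\phi_2(\xi):=e^{\sqrt{\mu}\xi}+A_\mu e^{-\sqrt{\mu}\xi}$ with $A_\mu:=(\sqrt{\mu}-1)/(\sqrt{\mu}+1)$, chosen so that $\phi_2'(0)-\phi_2(0)=0$. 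Their Wronskian is the constant $2\sqrt{\mu}$, and the usual variation-of-parameters formula produces the resolvent kernel
\[
G_{B,\sqrt{\mu}}(\xi,s)=\frac{1}{2\sqrt{\mu}}\Bigl(e^{-\sqrt{\mu}\,|\xi-s|}+A_\mu\,e^{-\sqrt{\mu}(\xi+s)}\Bigr),
\]
so that $(B-\mu I)^{-1}k(\xi)=\int_0^{+\infty}G_{B,\sqrt{\mu}}(\xi,s)\,k(s)\,ds$ for every $k\in L^p(0,+\infty)$.

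The estimate \eqref{Estimate on B} then follows by splitting $G_{B,\sqrt{\mu}}$ into a convolution part and a separable part. For the convolution part $\frac{1}{2\sqrt{\mu}}e^{-\sqrt{\mu}|\xi-s|}$, the function $s\mapsto\frac{1}{2\sqrt{\mu}}e^{-\sqrt{\mu}|\xi-s|}$ has $L^1(\mathbb{R})$-norm equal to $1/\mu$, uniformly in $\xi$, so Young's convolution inequality supplies an operator bound of order $1/\mu$ on $L^p$. For the separable part $\frac{A_\mu}{2\sqrt{\mu}}e^{-\sqrt{\mu}\xi}e^{-\sqrt{\mu}s}$, I would apply H\"older's inequality in $s$ to obtain $\bigl|\int_0^{+\infty}e^{-\sqrt{\mu}s}k(s)\,ds\bigr|\leq(q\sqrt{\mu})^{-1/q}\|k\|_{L^p}$, and then use $\|e^{-\sqrt{\mu}\xi}\|_{L^p(0,+\infty)}=(p\sqrt{\mu})^{-1/p}$; since $\tfrac{1}{p}+\tfrac{1}{q}=1$, the two exponents of $\sqrt{\mu}$ combine to give a bound of order $|A_\mu|/\mu$ for this piece. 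Using $|A_\mu|<1$ for every $\mu>0$ and adding the two contributions yields $\|(B-\mu I)^{-1}\|_{L(E)}\leq C/\mu$, as required.

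The main potential difficulty lies in controlling $A_\mu$ uniformly in $\mu\in(0,+\infty)$, since it might in principle degenerate near the endpoints of that range; however the explicit form of $A_\mu$ makes $|A_\mu|<1$ automatic, so no further analysis is needed. Compared with the $H$-argument, the half-line setting is in fact more tractable: the denominator $\sinh\sqrt{\mu}+\sqrt{\mu}\cosh\sqrt{\mu}$ of that proof is replaced by the constant Wronskian $2\sqrt{\mu}$, and the kernel here decays genuinely in both $|\xi-s|$ and $\xi+s$ throughout $\mathbb{R}^+$, so no finite-interval endpoint cancellations need to be tracked.
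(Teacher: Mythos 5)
Your proof is correct and follows essentially the same route as the paper: both write down the explicit Green's kernel $G_{2,\sqrt{\mu}}(\xi,s)=\frac{1}{2\sqrt{\mu}}\bigl(e^{-\sqrt{\mu}|\xi-s|}+A_\mu e^{-\sqrt{\mu}(\xi+s)}\bigr)$ with $A_\mu=(\sqrt{\mu}-1)/(\sqrt{\mu}+1)$, and your compact form agrees with the paper's piecewise one (indeed it corrects an apparent swap of the roles of $\xi$ and $s$ between the two cases of the paper's displayed kernel). The only difference is that where the paper delegates the norm estimate to \cite{Dore}, p.~1916, you carry it out directly via Young's inequality for the convolution part and H\"older's inequality together with $|A_\mu|<1$ for the rank-one part, which is a complete and correct substitute.
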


\begin{proof}
Let $\upsilon \in L^{p}\left( 
(0,\infty)\right) $. We have
\begin{equation*}
(B-\mu I)^{-1}\upsilon =\int \limits_{0}^{+\infty }G_{2,\sqrt{\mu }}(\xi
,s)\upsilon \left( s\right) ds,
\end{equation*}%
where%
\begin{equation*}
G_{2,\sqrt{\mu }}(\eta ,s)=\left \{ 
\begin{array}{l}
\dfrac{1}{2\sqrt{\mu }}e^{-\sqrt{\mu }s}\left[ e^{\sqrt{\mu }\xi }+c
\sqrt{\mu } e^{-\sqrt{\mu }\xi }\right], \text{\quad }0\leqslant
s\leqslant \xi , \\ 
\\ 
\dfrac{1}{2\sqrt{\mu }}e^{-\sqrt{\mu }\xi }\left[ e^{\sqrt{\mu }s}+c
\sqrt{\mu }e^{-\sqrt{\mu }s}\right], \text{\quad }\xi \leqslant
s\leqslant +\infty ,%
\end{array}%
\right.
\end{equation*}%
and 
$
c\left( \sqrt{\mu }\right) =(\sqrt{\mu }-1)/(\sqrt{\mu }+1).
$
The estimate (\ref{Estimate on B}) is handled by using the same argument
delivered in \cite{Dore}, p. 1916.
\end{proof}

The following lemma is needed to justify the use of the commutative version
of the sum's operator theory:

\begin{lem}
Let $B$ and $H$ be the linear operators defined by (\ref%
{H Operator})-(\ref{B Operator}). Then
\begin{equation*}
\left \{ 
\begin{array}{l}
\forall \mu _{1}\in \rho (B),\text{ }\forall \mu _{2}\in \rho (H) \\ 
(B-\mu _{1}I)^{-1}(H-\mu _{2}I)^{-1}-(H-\mu _{2}I)^{-1}(B-\mu _{1}I)^{-1}=0.%
\end{array}%
\right.
\end{equation*}
\end{lem}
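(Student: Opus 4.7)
The plan is to exploit the fact that $B$ acts only on the $\xi$ variable while $H$ acts only on the $\eta$ variable, so their resolvents are integral operators against kernels depending on disjoint sets of variables and should commute by Fubini's theorem.

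First I would interpret the resolvents $(B - \mu_1 I)^{-1}$ and $(H - \mu_2 I)^{-1}$ as bounded operators on $E = L^p(Q)$, extending them from the one-dimensional setting of the previous two lemmas by letting them act pointwise in the other variable. Concretely, for $\upsilon \in L^p(Q)$, I would write
\begin{equation*}
\bigl[(B - \mu_1 I)^{-1} \upsilon\bigr](\xi,\eta) = \int_0^{+\infty} G_{2,\sqrt{\mu_1}}(\xi,s)\, \upsilon(s,\eta)\, ds,
\end{equation*}
\begin{equation*}
\bigl[(H - \mu_2 I)^{-1} \upsilon\bigr](\xi,\eta) = \int_0^{1} G_{1,\sqrt{\mu_2}}(\eta,\sigma)\, \upsilon(\xi,\sigma)\, d\sigma.
\end{equation*}
A short verification (using the boundary conditions defining $D(B)$ and $D(H)$ together with $L^p$ regularity in the remaining variable) shows these are the correct resolvents on $E$ and that both map into $D(B)\cap D(H)$ when restricted appropriately.

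Next I would compose the two operators in either order and apply Fubini's theorem. For any $\upsilon \in L^p(Q)$,
\begin{equation*}
\bigl[(B - \mu_1 I)^{-1}(H - \mu_2 I)^{-1}\upsilon\bigr](\xi,\eta) = \int_0^{+\infty}\!\!\int_0^{1} G_{2,\sqrt{\mu_1}}(\xi,s)\, G_{1,\sqrt{\mu_2}}(\eta,\sigma)\, \upsilon(s,\sigma)\, d\sigma\, ds,
\end{equation*}
and the composition in the reverse order yields the same iterated integral. Since $G_{2,\sqrt{\mu_1}}(\xi,\cdot)\, \upsilon(\cdot,\cdot) \in L^1_s L^p_\sigma$ and $G_{1,\sqrt{\mu_2}}(\eta,\cdot)$ is bounded, Fubini applies and the two iterated integrals coincide. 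Hence the commutator vanishes.

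The only mildly delicate point is checking that the one-variable Green's-function formulas from the proofs of the preceding two lemmas genuinely represent the resolvents of $B$ and $H$ on the whole space $E = L^p(Q)$ (as opposed to on $L^p(0,+\infty)$ and $L^p(0,1)$ individually), and that the integrability needed to invoke Fubini holds for every $\upsilon \in L^p(Q)$. Once this is established — a routine consequence of Minkowski's integral inequality applied to the estimates in \eqref{Estimate on H} and \eqref{Estimate on B} — the commutation identity follows immediately.
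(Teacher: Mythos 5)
Your proposal is correct and takes essentially the same route as the paper: the paper's proof also represents $(B-\mu_1 I)^{-1}$ and $(H-\mu_2 I)^{-1}$ as integral operators against the Green's kernels $G_{1,\sqrt{\mu}}$ and $G_{2,\sqrt{\mu}}$ acting in disjoint variables and then exchanges the order of integration (Fubini) to conclude that the commutator vanishes. If anything, your write-up is more careful than the paper's, which leaves the extension of the one-variable resolvent formulas to $E=L^{p}(Q)$ and the integrability justification implicit (and is notationally muddled about which variables the kernels depend on).
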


\begin{proof}
Let $\upsilon \in E$. Then the required equality follows from the following computation: 
\begin{eqnarray*}
&& (B-\mu _{1}I)^{-1} (H-\mu _{2}I)^{-1}\upsilon 
(\xi ) \\
&=&\int_{0}^{+\infty }G_{1,\sqrt{\mu_{1} }}(\xi ,s)(H-\mu _{2}I)^{-1}\upsilon (\xi)
\left( s \right) ds \\
&=&\int_{0}^{+\infty }G_{1,\sqrt{\mu_{1} }}(\xi ,s)\int_{0}^{1}G_{2,\sqrt{\mu_{2} }%
}(\xi ,s)\left[ \upsilon (s)\right] \left( \tau \right) d\tau \, ds \\
&=&\int_{0}^{1}G_{2,\sqrt{\mu_{2} }}(\xi ,s)\left( \int_{0}^{+\infty }G_{1,\sqrt{%
\mu_{1} }}(\xi ,s)\left[\upsilon (s)\right] \left( \tau \right) ds\right) d\tau
\\
&=&\int_{0}^{1}G_{2,\sqrt{\mu_{2} }}(\xi ,s)\left( \int_{0}^{+\infty }G_{1,\sqrt{%
\mu_{1} }}(\xi ,s)\upsilon (s)\, ds\right) \left( \tau \right) d\tau \\
&=& (H-\mu _{2}I)^{-1}(B-\mu _{1}I)^{-1}\upsilon
( \xi ) .
\end{eqnarray*}
\end{proof}

We conclude this section with the following useful observation:

\begin{rem}
It is necessary to note that:

\begin{enumerate}
\item The use of a classical argument of analytic continuation of the
resolvent allows us to say that the previous estimates hold true in the
sectors 
\begin{equation*}
\sum \nolimits_{B}=\left \{ \mu :\left \vert \mu \right \vert \geq r\text{, }%
\left \vert Arg(\mu )\right \vert <\pi -\epsilon _{M}\right \} 
\end{equation*}%
and%
\begin{equation*}
\sum \nolimits_{H}=\left \{ \mu :\left \vert \mu \right \vert \geq r\text{, }%
\left \vert Arg(\mu )\right \vert <\pi -\epsilon _{H}\right \} ,
\end{equation*}%
provided that $\epsilon _{M}+\epsilon _{N}<\pi .$

\item Thanks to Proposition 2.1.1 in \cite{haase}, we have that $B$ and $H$
are densely defined.
\end{enumerate}
\end{rem}

Define a closed operator $A$ by $
Aw  :=\partial _{\xi }^{2}w+\partial _{\eta }^{2}w,$ where $D(A)$ is consisted of all  
functions $w\in W^{2,p}\left( Q\right)$ such that $ \partial
_{\xi }w-w|  _{\xi =0}=0,$ $w| _{\xi =+\infty
}=0,$ $\partial _{\eta }w-w| _{\eta =0}=0$ and $w|_{\eta =1}=0.$

\begin{lem}
Let $A$ be defined as above.
Then there exists $C>0$ such that%
\begin{equation}
(0,\infty) \subset \rho (A)\mbox{ and }\left \Vert (A-\lambda
I)^{-1}\right \Vert _{L(E)}\leqslant \dfrac{C}{\lambda },\quad \lambda>0.
\label{Estimate on A}
\end{equation}
\end{lem}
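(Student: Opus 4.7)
The plan is to identify $A$ with the commutative sum $B+H$ and invoke the Da Prato--Grisvard theorem recalled above. Under the canonical isometric identification $L^{p}(Q) \cong L^{p}((0,\infty); L^{p}(0,1))$, the operator $B$ acts only in the $\xi$-variable and $H$ acts only in the $\eta$-variable, and the domain $D(A)$ prescribed in the statement coincides with $D(B)\cap D(H)$. Hence on $E = L^{p}(Q)$ one has the algebraic identity $A = B + H$, and the preceding lemma ensures that the resolvents of $B$ and $H$ commute, so that hypothesis $(H.2)$ is satisfied.

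Next I would verify $(H.1)$. From the two previous lemmas the estimates $\|(B-\mu I)^{-1}\|\le C/|\mu|$ and $\|(H-\mu I)^{-1}\|\le C/|\mu|$ hold on the positive real axis; by the analytic-continuation argument pointed out in the remark preceding this lemma, they persist on sectors $\Sigma_{B}$ and $\Sigma_{H}$ whose half-apertures $\pi-\epsilon_{B}$ and $\pi-\epsilon_{H}$ can be taken so that $\epsilon_{B}+\epsilon_{H}<\pi$. The dense definition of $B$ and $H$ is recorded in the second point of that remark. Taking $M := B$ and $N := H$, the sum theorem applies for every $\lambda>0$ and yields both $\lambda\in\rho(A)$ and the Dunford-type representation
\begin{equation*}
(A-\lambda I)^{-1}u = -\frac{1}{2i\pi}\int_{\Gamma}(B+z)^{-1}(H-\lambda-z)^{-1}u\,dz,
\end{equation*}
along a sectorial contour $\Gamma\subset\rho(-B)\cap\rho(H-\lambda I)$.

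The norm bound then follows from a scale-invariant contour estimate. Parametrising $\Gamma$ as the boundary of a sector with vertex at $0$ and opening angle strictly between $\epsilon_{B}$ and $\pi-\epsilon_{H}$, the resolvent bounds give $\|(B+z)^{-1}\|_{L(E)}\le C/|z|$ and $\|(H-\lambda-z)^{-1}\|_{L(E)}\le C/|\lambda+z|$ uniformly along $\Gamma$. The substitution $z = \lambda\zeta$ rescales $\Gamma$ to a fixed contour $\Gamma'$ that is independent of $\lambda$, and produces
\begin{equation*}
\|(A-\lambda I)^{-1}\|_{L(E)} \le \frac{C}{2\pi\lambda}\int_{\Gamma'}\frac{|d\zeta|}{|\zeta|\,|1+\zeta|},
\end{equation*}
which is a finite numerical constant divided by $\lambda$. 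This delivers \eqref{Estimate on A}.

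The main obstacle is the sector bookkeeping for step two: verifying that the resolvent estimates of the two previous lemmas genuinely extend, with a uniform constant, to complex sectors of half-aperture close to $\pi$, so that the condition $\epsilon_{B}+\epsilon_{H}<\pi$ in $(H.1)$ is truly attainable. Both $B$ and $H$ are second derivatives with accretive boundary conditions on the half-line and the unit interval respectively, hence expected to be sectorial of any angle less than $\pi$; but justifying this cleanly requires re-examining the Green kernels displayed in the preceding proofs for complex $\mu$ and controlling them uniformly in the argument of $\mu$. Once this is done, the invocation of the sum theorem and the final contour integral are routine.
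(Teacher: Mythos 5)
Your proposal follows exactly the route the paper takes: identify $A$ with the commutative sum $B+H$, invoke the Da Prato--Grisvard theorem to get invertibility, and read off the bound $C/\lambda$ from the Dunford representation (\ref{Inverse form}); the paper's own proof is just a two-line assertion of these same steps, so you have in fact supplied more detail (notably the rescaling $z=\lambda\zeta$ in the contour estimate) than the authors do. The ``main obstacle'' you flag --- extending the real-axis resolvent estimates for $B$ and $H$ to sectors with $\epsilon_B+\epsilon_H<\pi$ --- is likewise left to a remark in the paper, so your treatment is faithful to, and slightly more careful than, the original.
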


\begin{proof}
This result is a direct consequence of the commutative version of the sum's
operator theory, showing that $(A-\lambda I)^{-1}$ is well defined.
The estimate (\ref{Estimate on A}) is easily handled from the formula
(\ref{Inverse form}).
\end{proof}

\subsection{Some regularity results for the abstract problem}

In order to give a fairly complete study of the abstract Cauchy problem (\ref{abstract
equation})$\sim $(\ref{Like ventcel conditions}), we follow the method
developed in \cite{acq}. We build the natural representation of the
solution by using the operational calculus and the Dunford integral. Towards this
end, consider the following scalar problem 
\begin{equation}
\begin{array}{l}
w^{\prime \prime }\left( t\right) -zw\left( t\right) =f\left( t\right) ,%
\text{ \  \  \ }0\leq t\leq 1,%
\end{array}
\label{scalar problem}
\end{equation}%
equipped with the following boundary conditions:
\begin{equation}
w^{\prime \prime }\left( 0\right) +w^{\prime }\left( 0\right) +w\left(
0\right) =0
\ \ \mbox{ and }\ \ 
w^{\prime \prime }\left( 1\right) +w^{\prime }\left( 1\right) +w\left(
1\right) =0.  \label{ICS2}
\end{equation}%
A direct computation shows that the unique solution of (\ref{scalar problem}%
)-(\ref{ICS2}) is given by 
\begin{align*}
w&\left( t\right) 
=\dfrac{1}{2\left( 1-e^{-2\sqrt{z}}\right) \sqrt{z}}\int%
\limits_{0}^{1}e^{-\sqrt{z}\left( 2-s+t\right) }f(s)\, ds
\\ & +\dfrac{1}{2\left(
1-e^{-2\sqrt{z}}\right) \sqrt{z}}\int \limits_{0}^{1}e^{-\sqrt{z}\left(
s-t+2\right) }f(s)\, ds \\
&-\frac{\left( z+\sqrt{z}+1\right) }{\left( z-\sqrt{z}+1\right) }\int
\limits_{0}^{1}e^{-\sqrt{z}\left( s+t\right) }f(s)\, ds-\frac{\left( z-\sqrt{z}%
+1\right) }{\left( z+\sqrt{z}+1\right) }\int \limits_{0}^{1}e^{-\sqrt{z}%
\left( 2-s-t\right) }f(s)\, ds \\
&+\frac{1}{2\sqrt{z}}\int \limits_{0}^{t}e^{-\sqrt{z}\left( t-s\right)
}f(s)\, ds+\frac{1}{2\sqrt{z}}\int \limits_{t}^{1}e^{-\sqrt{z}\left(
s-t\right) }f(s)\, ds.
\end{align*}%
It is well known that \eqref{Estimate on A} implies the existence of numbers $%
\delta \in \left] 0,\frac{\pi }{2}\right[ $ and $r>0$ such that the
resolvent set of $A$ contains the following sector of the complex plane

\begin{equation*}
\Pi _{\delta ,r}=\left \{ z\in 
\mathbb{C}
^{\ast }:\left \vert \arg (z)\right \vert \leqslant \delta \right \} \cup
\left \{ z\in 
\mathbb{C}
:\left \vert z\right \vert \leqslant r\right \} .  
\end{equation*}%
If $\Gamma $ denotes the sectorial boundary curve of $\Pi _{\delta ,r}$
oriented positively, then the natural representation of the solution of (\ref%
{abstract equation})$\sim $(\ref{Like ventcel conditions}), in the abstract
case, is given by 
\begin{equation}
w(t)=\sum \limits_{i=1}^{6}S_{i}\left( t,f\right) ,  \label{formal solution}
\end{equation}%
where%
\begin{equation*}
\begin{array}{l}
S_{1}\left( t,f\right) =\frac{1}{2i\pi }\int \limits_{\Gamma }\int \limits_{0}^{1}%
\dfrac{1}{2\left( 1-e^{-2\sqrt{z}}\right) \sqrt{z}}e^{-\sqrt{z}\left(
2-s+t\right) }(A-\lambda I)^{-1}f(s)\, ds, \\ 
S_{2}\left( t,f\right) =\frac{1}{2i\pi }\int \limits_{\Gamma }\int \limits_{0}^{1}%
\dfrac{1}{2\left( 1-e^{-2\sqrt{z}}\right) \sqrt{z}}e^{-\sqrt{z}\left(
s-t+2\right) }(A-\lambda I)^{-1}f(s)\, ds, \\ 
S_{3}\left( t,f\right) =-\frac{1}{2i\pi }\int \limits_{\Gamma }\int \limits_{0}^{1}%
\frac{\left( z+\sqrt{z}+1\right) }{\left( z-\sqrt{z}+1\right) }e^{-\sqrt{z}%
\left( s+t\right) }(A-\lambda I)^{-1}f(s)\, ds\, dz, \\ 
S_{4}\left( t,f\right) =-\frac{1}{2i\pi }\int \limits_{\Gamma }\int \limits_{0}^{1}%
\frac{\left( z-\sqrt{z}+1\right) }{\left( z+\sqrt{z}+1\right) }e^{-\sqrt{z}%
\left( 2-s-t\right) }(A-\lambda I)^{-1}f(s)\, ds \, dz, \\ 
S_{5}\left( t,f\right) =\dfrac{1}{2i\pi }\int \limits_{\Gamma }\int \limits_{0}^{t}%
\dfrac{e^{-\sqrt{z}\left( t-s\right) }}{2\sqrt{z}}(A-\lambda I)^{-1}f(s)\, ds,
\\ 
S_{6}\left( t,f\right) =\dfrac{1}{2i\pi }\int \limits_{\Gamma }\int \limits_{t}^{1}%
\dfrac{e^{-\sqrt{z}\left( s-t\right) }}{2\sqrt{z}}(A-\lambda I)^{-1}f(s)\, ds.%
\end{array}%
\end{equation*}%
Observe that the estimate (\ref{Estimate on A}) yields the convergence of
the integrals occurring in (\ref{formal solution}). Furthermore, we have the following proposition:

\begin{prop}
Let $f\in C^{\theta }\left( \left[ 0,1\right] ;E\right) $, with $0<\theta <1$%
\ and $1<p<\infty $. Then we have

\begin{enumerate}
\item for all $t\in \left[ 0,1\right] :S_{i}\left( t,f\right) \in D\left(
A\right) ,$ $i=1,2,...,6.$

\item for all $t\in \left[ 0,1\right] :AS_{i}\left( t,f\right) \in C^{\theta
}\left( \left[ 0,1\right] ;E\right) ,$ $i=1,2,...,6.$
\end{enumerate}
\end{prop}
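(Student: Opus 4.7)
The plan is to combine Dunford-calculus estimates along $\Gamma$ with the classical Hölder-subtraction trick of Da Prato--Grisvard, as carried out in \cite{acq}.

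First I would settle assertion (1): since $(A-\lambda I)^{-1}$ sends $E$ into $D(A)$, the estimate \eqref{Estimate on A} combined with the exponential decay $|e^{-\sqrt{z}\alpha}|=e^{-\alpha\RE\sqrt{z}}$ along $\Gamma$ (with $\alpha>0$ away from the diagonal $s=t$ in $S_5,S_6$, and at worst $\alpha=0$ at isolated boundary points in $S_3,S_4$ where Fubini in $s$ produces a compensating factor $\sqrt{z}^{-1}$) makes each integrand absolutely Bochner integrable as a $D(A)$-valued function. Closedness of $A$ then lets me pass $A$ under the integral sign, after which $A$ acts only through $A(A-\lambda I)^{-1}=I+\lambda(A-\lambda I)^{-1}$, a uniformly bounded operator on $E$. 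This gives $S_i(t,f)\in D(A)$ and an explicit formula for $AS_i(t,f)$.

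For assertion (2), I would split the terms into the ``non-singular'' boundary pieces $S_1,\dots,S_4$ and the convolution pieces $S_5,S_6$. In the former, the exponent $\sqrt{z}\,\alpha(t,s)$ depends smoothly on $t\in[0,1]$, and the elementary estimate $|e^{-\sqrt{z}\alpha(t,s)}-e^{-\sqrt{z}\alpha(t',s)}|\le|\sqrt{z}|\,|t-t'|\,e^{-c\RE\sqrt{z}}$, together with the standard splitting of $\Gamma$ into a bounded arc and its two unbounded rays, produces a bound $\|AS_i(t,f)-AS_i(t',f)\|_E\le C|t-t'|\,\|f\|_{C^\theta([0,1];E)}$; Lipschitz continuity is stronger than Hölder-$\theta$, so these four terms are settled.

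The main obstacle is the treatment of $S_5$ and $S_6$, where the kernel $e^{-\sqrt{z}(t-s)}/(2\sqrt{z})$ is singular as $s\to t$ and the naive bound fails. The remedy is to use $f(s)=(f(s)-f(t))+f(t)$ and split
\begin{equation*}
AS_5(t,f)=\frac{1}{2i\pi}\int_\Gamma\int_0^t\frac{e^{-\sqrt{z}(t-s)}}{2\sqrt{z}}\,A(A-\lambda I)^{-1}(f(s)-f(t))\,ds\,dz+J(t)f(t),
\end{equation*}
where the residual $J(t)f(t)$ comes from the explicit scalar integral $\int_0^t e^{-\sqrt{z}(t-s)}\,ds=(1-e^{-\sqrt{z}t})/\sqrt{z}$ and is a smooth operator-valued function of $t$. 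The first piece is dominated in $E$-norm by $C|t-s|^\theta|\sqrt{z}|^{-1}e^{-c(t-s)\RE\sqrt{z}}\|f\|_{C^\theta}$, integrable in both variables after Fubini. To obtain the Hölder-$\theta$ bound on the increment $AS_5(t,f)-AS_5(t',f)$, I would split the outer $s$-integration at $s=t$ and estimate the pieces on $[0,t]$ and $[t,t']$ separately, as in \cite{acq}; the interplay between the subtracted term and $J(t)f(t)-J(t')f(t')$ (itself Hölder-$\theta$ because $f$ is and $J$ is smooth in $t$) produces the desired bound $C|t-t'|^\theta\|f\|_{C^\theta([0,1];E)}$. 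The analysis of $S_6$ is entirely symmetric, with $[t,1]$ playing the role of $[0,t]$.
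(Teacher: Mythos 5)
The paper offers no argument of its own here: its ``proof'' is a one--sentence deferral to Proposition 3.1 of the Berroug--Labbas--Sadallah reference, and your sketch reconstructs exactly the method used in that line of work (Dunford representation along $\Gamma$, closedness of $A$ together with $A(A-zI)^{-1}=I+z(A-zI)^{-1}$, and the H\"older subtraction $f(s)=(f(s)-f(t))+f(t)$ for the singular convolution terms). Your treatment of $S_{5}$ and $S_{6}$ is the standard and correct one, and your reading of the resolvent in the $S_{i}$ as the resolvent evaluated along $\Gamma$ (rather than the literal, $z$--independent $(A-\lambda I)^{-1}$ printed in the paper) is the only sensible one.

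The gap is in your claim that $S_{1},\dots ,S_{4}$ are uniformly non--singular and in fact Lipschitz. For $S_{1}$ and $S_{2}$ the exponents $2-s+t$ and $s-t+2$ are bounded below by $1$ on $[0,1]^{2}$, so the uniform factor $e^{-c\RE \sqrt{z}}$ is available and your argument goes through. But for $S_{3}$ the exponent is $s+t$, which vanishes at $s=t=0$, and for $S_{4}$ it is $2-s-t$, which vanishes at $s=t=1$; your elementary estimate $\vert e^{-\sqrt{z}\alpha (t,s)}-e^{-\sqrt{z}\alpha (t',s)}\vert \leq \vert \sqrt{z}\vert \, \vert t-t'\vert \, e^{-c\RE \sqrt{z}}$ presupposes $\alpha \geq c>0$ and is simply unavailable there. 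Concretely, at $t=0$ the inner integration only yields $\int_{0}^{1}e^{-s\RE \sqrt{z}}\,ds\leq (\RE \sqrt{z})^{-1}\sim \vert z\vert ^{-1/2}$, while the prefactor $(z+\sqrt{z}+1)/(z-\sqrt{z}+1)$ is $O(1)$ and $A(A-zI)^{-1}$ is merely bounded; the resulting majorant $\int_{\Gamma }\vert z\vert ^{-1/2}\,\vert dz\vert$ diverges, so neither the absolute convergence of $A S_{3}(0,f)$ needed for your closedness argument in part (1), nor the claimed Lipschitz increment bound in part (2), follows from what you wrote. These two terms need the same subtraction device as $S_{5},S_{6}$, applied in the $s$ variable at the endpoint (write $f(s)=(f(s)-f(0))+f(0)$ for $S_{3}$, and symmetrically at $s=1$ for $S_{4}$), after which the constant part must be handled through the analytic semigroup generated by $-(-A)^{1/2}$; this is precisely where the delicate endpoint analysis --- and, in general, the interpolation--space or compatibility conditions --- of Acquistapace--Terreni and of the cited Proposition 3.1 enters. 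In short, the genuinely hard places are $S_{3}$ at $t=0$ and $S_{4}$ at $t=1$ just as much as the diagonal in $S_{5},S_{6}$, and your sketch dismisses them.
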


\begin{proof}
The proof is carried out by analogy with the proof of Proposition 3.1 in 
\cite{berroug thesis}.
\end{proof}

As an immediate consequence of this proposition, the main result concerning
problem (\ref{abstract equation})$\sim $(\ref{Like ventcel conditions})
reads as follows:

\begin{thm}
Let $f\in C^{\theta }\left( \left[ 0,1\right] ;L^{p}\left( Q\right) \right) $
with $0<\theta <1$\ and $1<p<\infty $. Then Problem (\ref{abstract equation}%
)$\sim $(\ref{Like ventcel conditions}) has a unique solution%
\begin{equation*}
w\in C(\left[ 0,1\right] ;D\left( A\right) )\cap C^{2}(\left[ 0,1\right] ;E)
\end{equation*}%
satisfying 
$
w^{\prime \prime }\text{ and }Aw\in C^{\theta }\left( \left[ 0,1\right]
;E\right) .
$
\end{thm}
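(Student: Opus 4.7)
The plan is to take the explicit Dunford representation (\ref{formal solution}) as a candidate solution and read off every claim of the theorem from it, with the preceding proposition doing almost all of the work. The overall scheme mirrors the Acquistapace--Terreni / Da Prato--Grisvard approach for second-order abstract Cauchy problems with H\"older data: build the solution by operational calculus from the scalar Green representation (\ref{scalar problem})--(\ref{ICS2}), then transfer the scalar estimates to $E$ via the resolvent bound (\ref{Estimate on A}).

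First, I would check that (\ref{formal solution}) is well defined in $E$. The sectorial boundary $\Gamma\subset\rho(A)$ carries the exponential factors $e^{-\sqrt{z}(\cdot)}$, which dominate the polynomial-in-$z$ weights appearing in each integrand; combined with (\ref{Estimate on A}), every $S_i(t,f)$ converges absolutely in $E$, and the convergence is uniform in $t\in[0,1]$. Next, I would apply the preceding proposition term by term: for each $i\in\{1,\dots,6\}$, $S_i(t,f)\in D(A)$ for every $t$ and $AS_i(\cdot,f)\in C^{\theta}([0,1];E)$. Summing over $i$ yields $w(t)\in D(A)$ for every $t\in[0,1]$, together with $Aw\in C^{\theta}([0,1];E)$. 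Since $A$ is closed and $Aw$ is continuous, one obtains $w\in C([0,1];D(A))$. Rewriting (\ref{abstract equation}) in the form
\[
w''(t)=Aw(t)+\lambda w(t)+f(t),
\]
the right-hand side is then H\"older continuous of exponent $\theta$, so $w''\in C^{\theta}([0,1];E)\subset C([0,1];E)$ and hence $w\in C^{2}([0,1];E)$.

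The boundary conditions $\mathcal{L}w\vert_{t=0}=\mathcal{L}w\vert_{t=1}=0$ are inherited from the construction: the kernel of each $S_i$ was built from the unique scalar solution of (\ref{scalar problem})--(\ref{ICS2}), so for every fixed $z\in\Gamma$ the scalar Ventcel identities hold pointwise; applying $\mathcal{L}$ and exchanging it with the absolutely convergent Dunford integral transfers those identities to $w$. For uniqueness, any solution $w$ of the homogeneous problem ($f\equiv 0$) in $C([0,1];D(A))\cap C^{2}([0,1];E)$ must, by the functional calculus associated with $-(A+\lambda I)$, coincide with the Dunford image of the homogeneous scalar solution; since the explicit scalar formula vanishes identically when $f\equiv 0$, this forces $w\equiv 0$.

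The main obstacle is the H\"older regularity $AS_i(\cdot,f)\in C^{\theta}([0,1];E)$ packaged in the preceding proposition: it requires splitting each kernel into a singular part (controlled by the H\"older modulus of $f$ via a $|t-t'|^{\theta}$ bound after a suitable change of variable under the $z$-integral) and a smoothing part (handled by bare sectorial estimates using (\ref{Estimate on A}) and the decay of the exponentials), and then summing the six contributions. This is the technical heart of the argument but is imported wholesale from Proposition 3.1 of \cite{berroug thesis}; once accepted, the theorem follows immediately as indicated above.
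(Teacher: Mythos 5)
Your proposal matches the paper's approach: the theorem is presented there as an immediate consequence of the preceding proposition on the terms $S_{i}(t,f)$ of the Dunford representation (\ref{formal solution}), with the technical H\"older estimates deferred to Proposition 3.1 of \cite{berroug thesis}, exactly as you do. Your additional remarks on convergence, the boundary conditions, and uniqueness only flesh out steps the paper leaves implicit.
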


Applying all the preceding abstract results to the transformed problem 
(\ref{Tran_prob})-(\ref{New IC})-(\ref{New BC}), we obtain the
following proposition:

\begin{prop}
\label{main results for compl tans prob}\textit{Let }$f\in C^{\theta }\left( %
\left[ 0,1\right] ;L^{p}\left( Q\right) \right) ,$\ with $0<\theta <1$ and $%
1<p<+\infty $. \textit{Then, Problem (\ref{Tran_prob})-(\ref{New IC})-(\ref%
{New BC}) has a unique solution }$w\in C^{2}\left( \left[ 0,1\right]
;L^{p}\left( Q\right) \right) .$ Moreover, $w$ satisfies the maximal
regularity 
\begin{equation*}
\varrho \partial _{t}^{2}w,\text{ }\partial _{\xi }^{2}w+\partial _{\eta
}^{2}w\in C^{\theta }\left( \left[ 0,1\right] ;L^{p}\left( Q\right) \right) .%
\end{equation*}
\end{prop}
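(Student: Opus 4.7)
My plan is to treat the full transformed problem as a perturbation of the abstract equation already handled by the preceding Theorem. That Theorem supplies a bounded solution operator $\mathcal{R}_\lambda$, sending each $\tilde f \in C^\theta([0,1];E)$ (with $E = L^p(Q)$) to the unique solution of $w'' - Aw - \lambda w = \tilde f$ satisfying $\mathcal{L}w|_{t=0,1}=0$ and lying in the maximal regularity class $X := \{w \in C([0,1]; D(A)) \cap C^2([0,1];E) : Aw,\ w'' \in C^\theta([0,1];E),\ \mathcal{L}w|_{t=0,1}=0\}$. Rewriting (\ref{Tran_prob}) as $w'' - Aw - \lambda w = f + (1-\varrho) w'' + \mathcal{P} w$, a solution of the full transformed problem is exactly a fixed point in $X$ of the map $T(w) := \mathcal{R}_\lambda\bigl(f + (1-\varrho) w'' + \mathcal{P} w\bigr)$.

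First, I would verify that $T$ maps $X$ into itself. The weight $\varrho = \varphi^{2/q}$ is bounded on $[0,a]$, hence multiplication by $1-\varrho$ is a bounded operator on $E$ and $(1-\varrho) w'' \in C^\theta([0,1];E)$ whenever $w'' \in C^\theta([0,1];E)$. The perturbation $\mathcal{P}$ has $C^\infty$-bounded coefficients in $\Sigma$ and is at most second-order in the spatial variables, so $\mathcal{P} w \in C^\theta([0,1];E)$ whenever $w \in C([0,1]; D(A))$ with Hölder-continuous $w''$. Composing with $\mathcal{R}_\lambda$ then yields $T(w) \in X$.

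Next, I would show that $T$ is a contraction on $X$ for $\lambda$ large enough, by combining the $C/\lambda$ resolvent estimate (\ref{Estimate on A}) with the Dunford representation of $\mathcal{R}_\lambda$ through the six terms $S_i(t,f)$: this yields $\|\mathcal{R}_\lambda\|_{L(C^\theta(E),X)} \to 0$ as $\lambda \to \infty$, while the perturbation map obeys a uniform bound $\|(1-\varrho) w'' + \mathcal{P} w\|_{C^\theta(E)} \leq C\|w\|_X$. Banach's fixed-point theorem then produces a unique $w \in X$ solving the full transformed problem, and uniqueness in the larger parameter range follows by continuation. Finally, $\varrho \partial_t^2 w \in C^\theta([0,1]; L^p(Q))$ is read off directly from the PDE, $\varrho \partial_t^2 w = f + Aw + \lambda w + \mathcal{P} w$, whose right-hand side lies in $C^\theta$ term by term.

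The principal obstacle is the contraction step: neither $1-\varrho$ nor the coefficients of $\mathcal{P}$ are globally small (indeed $\varrho \to 0$ as $\xi \to +\infty$), so the entire gain must come from the large-$\lambda$ decay of $\mathcal{R}_\lambda$ in the $X$-norm, whose verification requires careful term-by-term estimates on each $S_i(t,f)$ and is where the bulk of the work lies. Should the direct contraction at large $\lambda$ prove elusive, a natural fallback is to exploit $\varphi'(0)=0$ to localize the smallness of the coefficients of $\mathcal{P}$ near the cusp $\xi = +\infty$ and absorb the remaining compactly-supported perturbation via a Fredholm/Neumann-series argument in the spirit of \cite{berroug thesis}.
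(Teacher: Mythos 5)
The paper gives essentially no argument for this proposition: it is announced as what one obtains by ``applying all the preceding abstract results'' to (\ref{Tran_prob}), with the passage from the clean equation $w''-Aw-\lambda w=f$ (the only one actually analyzed) to the full transformed problem --- which carries the degenerate coefficient $\varrho$ in front of $\partial_t^2 w$ and the perturbation $\mathcal{P}$ --- left entirely implicit. Your fixed-point scheme is therefore the natural way to try to supply the missing step, and your handling of $\mathcal{P}$ is broadly plausible: its coefficients are bounded, it is of order at most two in the spatial variables (though note it contains $\partial^2_{\eta\xi}w$, which is controlled by $\|Aw\|_{L^p}$ only after invoking elliptic $W^{2,p}$ regularity on the strip), and its leading coefficients $(\varphi_2'+\eta\varphi')$ vanish at the cusp, so the localization fallback you mention is the right instinct for that term.

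The genuine gap is the term $(1-\varrho)w''$. Since $\varrho=\varphi^{2/q}$ and $\varphi(x)\to 0$ as $x\to 0$, you correctly note that $\varrho\to 0$ as $\xi\to+\infty$; but this means $1-\varrho\to 1$ at the cusp, so this ``perturbation'' is of exactly the same size as the principal term $w''$, and it is worst precisely at the singularity, so no localization helps. Your only remaining lever is the claimed decay $\|\mathcal{R}_\lambda\|_{L(C^\theta(E),X)}\to 0$, and this is false for the $w''$-component of the $X$-norm: from $w''=\tilde f+Aw+\lambda w$ and the maximal-regularity bounds (uniform, but not decaying, in $\lambda$) for $Aw$ and $\lambda w$, the map $\tilde f\mapsto w''$ is bounded uniformly in $\lambda$ but does not become small --- already in the scalar model $w''-\lambda w=\tilde f$ the high-frequency content of $\tilde f$ below scale $\lambda^{-1/2}$ passes into $w''$ with coefficient essentially one, so $\|w''\|_{C^\theta}$ remains comparable to $\|\tilde f\|_{C^\theta}$ as $\lambda\to\infty$. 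Hence $T$ is not a contraction in the norm you need, and neither of your fallbacks (localizing $\mathcal{P}$, a Fredholm argument) touches this term. One could try to contract in a weaker norm measuring $w''$ in $C^{\theta'}$ with $\theta'<\theta$, where a gain $\lambda^{-(\theta-\theta')/2}$ is available, but then the fixed point only delivers $C^{\theta'}$ maximal regularity, not the claimed $C^\theta$. A correct treatment has to build $\varrho$ into the abstract problem itself (e.g., analyze $\varrho w''-(A+\lambda)w=f$ directly at the level of the Dunford representation, or work with $\varrho^{-1}(A+\lambda)$); this is exactly the step the paper also omits, so your proposal does not close the gap --- it relocates it.
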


\section{Resolution of the original problem}

Recall that 
\begin{equation*}
w=\varphi ^{-2/q}v=\varphi ^{2/p}\varphi ^{-2}u.
\end{equation*}%
Proposition \ref{main results for compl tans prob} allows us to
conclude that%
\begin{equation*}
\varphi ^{-2}u\in C^{2}\left( \left[ 0,1\right] ;L^{p}\left( \Omega \right)
\right) .
\end{equation*}%
On the other hand, observe that%
\begin{equation*}
\varrho \partial _{t}^{2}w=\varphi ^{2/q}\partial _{t}^{2}w=\varphi
^{2/q}\varphi ^{-2/q}\partial _{t}^{2}v=\partial _{t}^{2}u
\end{equation*}%
and 
\begin{equation*}
\partial _{\eta }^{2}w=\varphi ^{-2/q}\varphi ^{2}\partial _{y}^{2}u=\varphi
^{2/p}\partial _{y}^{2}u.
\end{equation*}%
Keeping in mind that%
\begin{equation*}
\varrho \partial _{t}^{2}w,\text{ }\partial _{\xi }^{2}w+\partial _{\eta
}^{2}w\in C^{\theta }\left( \left[ 0,1\right] ;L^{p}\left( Q\right) \right) ,
\end{equation*}%
we may deduce that 
\begin{equation*}
\partial _{t}^{2}u,\text{ }\partial _{x}^{2}u+\partial _{y}^{2}
\end{equation*}%
belong to $C^{\theta }\left( \left[ 0,1\right] ;L^{p}\left( \Omega \right)
\right) .$This, in turn, implies that 
\begin{equation*}
\partial _{t}^{2}u,\text{ }\partial _{x}^{2}u,\text{ }\partial _{y}^{2}
\end{equation*}%
are in class $C^{\theta }\left( \left[ 0,1\right] ;L^{p}\left( \Omega \right) \right) ,
$ which completes the proof of Theorem \ref{main result}.

\end{document}